\newcommand{\rl}{{\mathbb{R}}}
\newcommand{\cx}{{\mathbb{C}}}
\newcommand{\e}{\varepsilon}
\newcommand{\lam}{\lambda}
\renewcommand {\r}{\rho}
\renewcommand{\bar}{\overline}
\let\del=\partial
\newtheorem{thm}{Theorem}
\newtheorem{propos}[thm]{Proposition}
\newtheorem{lem}[thm]{Lemma}
\theoremstyle{definition}
\title{Critical sets of proper holomorphic mappings}
\author {Sergey Pinchuk}
\address{Department of Mathematics, Indiana University, Rawles Hall
831 East 3rd St Bloomington, IN 47405, USA}
\email{pinchuk@indiana.edu}
\author {Rasul Shafikov}
\email{}
\address{Department of Mathematics, The University of Western Ontario, London, Ontario N6A 5B7 Canada}
\email{shafikov@uwo.ca}
\begin{document}

\subjclass[2000]{32D15, 32V40, 32H02, 32H04, 32H35, 32M99, 32T25,
34M35}

\keywords{Holomorphic mappings, holomorphic correspondences, real
hypersurfaces, Segre varieties, boundary regularity, analytic continuation}

\date{October 3, 2012}

\begin{abstract}
It is shown that if a proper holomorphic map $f: \mathbb C^n  \to \mathbb C^N$,
$1<n\le N$, sends a pseudoconvex real analytic hypersurface of finite type into another
such hypersurface, then any $n-1$ dimensional component of the critical locus of
$f$ intersects both sides of $M$. We apply this result to the problem of boundary regularity 
of proper holomorphic mappings between bounded domains in $\mathbb C^n$.
\end{abstract}

\maketitle

\section{Introduction and main results}

The goal of this article is to prove the following theorem that describes geometry of the critical set
of a proper holomorphic map between real analytic hypersurfaces.

\begin{thm}\label{t.1}
Let $D\subset \cx^n$, $D'\subset \cx^N$, $2\le n \le N$, be domains and $f: D \to D'$
be a proper holomorphic map that extends holomorphically to a neighbourhood  $U\subset \cx^n$
of a point $a\in \partial D$. Suppose that $\partial D \cap U$ and $\partial D' \cap U'$ are
smooth real analytic pseudoconvex hypersurfaces of finite type, where $U'\subset \cx^N$
is a neighbourhood of $f(a)\in \partial D'$. Let $E$ be an irreducible $(n-1)$-dimensional component 
of the critical set of $f$ in $U$ with $a\in E$. Then $E \cap (D\cap U)\ne \varnothing$.
\end{thm}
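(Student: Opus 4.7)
The plan is to argue by contradiction, assuming $E\cap (D\cap U)=\varnothing$, so that $E$, minus its real-analytic trace on $M=\partial D\cap U$, lies in the outer region $U\setminus \bar D$.

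\smallskip

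\emph{Reduction to a generic boundary point.} Since $M$ is of finite type, it contains no complex $(n-1)$-dimensional varieties; hence $E\not\subset M$, and $E\cap M$ is real-analytic of real dimension $\le 2n-3$. Any other irreducible component of the critical set of $f$ through $a$ meets $E\cap M$ in a set of strictly smaller real dimension. After shrinking $U$ and replacing $a$ by a generic point of $E\cap M$ nearby, we may assume that $E$ is the \emph{only} irreducible component of the critical set of $f$ in $U$. Combined with the contradiction hypothesis, this forces $f$ to have full rank $n$ at every point of $D\cap U$; in particular $f|_{D\cap U}$ is an immersion.

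\smallskip

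\emph{Orientation.} Pseudoconvexity of $D$ and $D'$, together with $f(M\cap U)\subset M'$ and $f(D\cap U)\subset D'$, implies by the classical orientation principle that the holomorphic extension of $f$ sends $U\setminus\bar D$ into $U'\setminus\bar{D'}$. Consequently every preimage in $U$ of a point $w\in D'\cap U'$ automatically lies in $D\cap U$.

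\smallskip

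\emph{Inverse branches.} Because $a\in E$, the local multiplicity $\mu$ of $f$ at $a$ (viewing $f$ as a proper map onto its image in $D'\cap U'$) is $\ge 2$. For a generic $w_0\in D'\cap U'$ close to $f(a)$, the fibre $f^{-1}(w_0)\cap U$ consists of $\mu$ distinct points clustering at $a$, all contained in $D\cap U$ by the orientation step. The immersion $f|_{D\cap U}$ therefore admits $\mu$ holomorphic inverse branches $\phi_1,\dots,\phi_\mu$ defined on a one-sided neighbourhood of $f(a)$ in $f(D\cap U)$, each extending continuously to $f(a)$ with $\phi_j(f(a))=a$ and with $\phi_j$ sending the boundary piece into $M$.

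\smallskip

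\emph{Reflection and contradiction.} The real-analyticity, pseudoconvexity, and finite type of $M,M'$ let us invoke the Schwarz reflection principle for holomorphic mappings to extend each $\phi_j$ across $M'$ (or across $M'\cap f(D)$ if $n<N$) to a full neighbourhood of $f(a)$. On the extended domain the identity $f\circ\phi_j=\mathrm{id}$ persists by the identity principle; differentiating at $f(a)$ yields $df_a\cdot d\phi_j(f(a))=\mathrm{id}$ on the $n$-dimensional tangent space of the domain. Hence $df_a$ has rank $n$, which forces $a\notin E$, contradicting our choice of $a$.

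\smallskip

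\emph{Main obstacle.} The delicate ingredient is the holomorphic extension of the inverse branches $\phi_j$ across $M'$: one has many branches simultaneously approaching the single boundary point $f(a)$, and the finite type plus pseudoconvexity of $M'$ is used essentially to promote their one-sided boundary regularity to two-sided holomorphic extension. This reflection step is where the full strength of the boundary hypotheses enters, and it is the heart of the argument.
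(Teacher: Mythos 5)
Your proposal takes a route that is entirely different from the paper's (the paper never considers inverse branches or reflection; it combines the Diederich--Fornaess plurisubharmonic exhaustion and the Hopf lemma to show $\frac{\del f_N}{\del z_n}(a)\ne 0$, and then derives a contradiction by showing, via a tangent-cone and Schwarz-lemma argument, that tangency of $E$ and $f(E)$ to the two boundaries would force this derivative to vanish). Unfortunately your route has genuine gaps. The central one is the ``Reflection and contradiction'' step, which you yourself flag as the heart of the argument but do not carry out. There is no off-the-shelf Schwarz reflection principle that extends the individual inverse branches $\phi_j$ holomorphically across a finite-type pseudoconvex boundary point $f(a)$ at which the branches cluster: $f(a)$ is precisely a branch point of $f^{-1}$, and inverse branches do not in general extend holomorphically through branch points (compare $w\mapsto\sqrt{w}$ at $w=0$). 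The known reflection results for finite-type real-analytic hypersurfaces (Baouendi--Rothschild, Diederich--Fornaess, Diederich--Pinchuk) yield extension as multiple-valued \emph{correspondences} at such points, and splitting a correspondence into holomorphic branches at a branch point is essentially equivalent to the statement you are trying to prove. As written, the argument assumes its conclusion.

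There are two further problems. First, the claim that $a\in E$ forces the local multiplicity $\mu$ to be at least $2$ is false when $n<N$: a proper holomorphic map onto its image can be injective and still have an $(n-1)$-dimensional critical locus (for example $(z_1,z_2)\mapsto(z_1,z_2^2,z_2^3)$), so the fibre count gives no information and the whole inverse-branch setup does not address the non-equidimensional case that the theorem explicitly covers. Second, the reduction to a generic point of $E\cap M$ at which $E$ is the only critical component can fail: under your contradiction hypothesis $E$ is tangent to $M$ from outside, and the contact set $E\cap M$ can be very small (even a single point), in which case it may be entirely contained in the other components of the critical set and no such generic point exists; without this reduction $f|_{D\cap U}$ need not be an immersion and the branches $\phi_j$ need not exist. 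The orientation step is correct in substance, but even it is not a purely ``classical'' fact for weakly pseudoconvex finite-type boundaries; the paper obtains it from the Hopf lemma applied to $-(-\rho')^{1-\e}\circ f$. I would encourage you to look at the paper's argument: it avoids reflection entirely and reduces the theorem to an elementary but delicate statement (Lemma~\ref{l.one}) about tangent planes of an analytic set lying on one side of a hypersurface.
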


We note that the neighbourhood $U\ni a$ in Theorem~\ref{t.1} for which $E \cap (D\cap U)\ne \varnothing$
is arbitrarily small. In this case we say that $E$ {\it enters} the domain $D$ at the point $a$. 

We apply  Theorem~\ref{t.1} to the study of the old conjecture that a proper holomorphic map $f: D \to D'$  between 
bounded domains in $\mathbb C^n$ with real analytic boundaries extends holomorphically to a neighbourhood of the closure 
of $D$. The history of this conjecture began in the 70-ties when it was proved for strictly pseudoconvex domains by 
Lewy~\cite{l} and Pinchuk~\cite{p}. The conjecture has been studied by many authors but still remains open in full generality. 
However, it has been proved in the following considerable special cases:   
\begin{enumerate}
\item[(1)]  $D, D’$ are pseudoconvex, $n \ge 2$  (Diederich-Fornaess~\cite{DF88}, Baouendi-Rothchild~\cite{br});
\item[(2)]  $n=2$ (Diederich-Pinchuk~\cite{DiPi3});
\item[(3)]  $f$ is continuous in the closure of $D$, $n\ge 2$ (Diederich-Pinchuk~\cite{DiPi}).
\end{enumerate}
The proofs of these results consist of two major steps. Step 1 is to show that $f$ extends as a holomorphic correspondence to a 
neighbourhood  of the closure of $D$. Step 2 is to prove that this correspondence is, in fact, a holomorphic map.
The main method for step 1 is the multidimensional reflection principle, based on the technique of Segre varieties. 
For a survey on the subject we refer the reader to~\cite{DiPi2}. Except the case $n=2$, step 1 was realized so far only under 
additional assumption of some a priori boundary regularity of $f$. In particular, in~\cite{DiPi} it was proved provided that 
$f \in C(\overline D)$. We also note that continuous extension of $f$ to $\overline D$ was proved in pseudoconvex case by 
Diederich-Fornaess~\cite{DF79}. Step 2 is essentially the following result.

\begin{thm}\label{t.m}
Let  $D, D' \subset \mathbb C^n$, $n\ge 2$,  be bounded domains with real-analytic boundaries and $f: D \to D'$  be a proper holomorphic 
map that extends as a holomorphic correspondence to a neighbourhood of $\overline D$. Then $f$ extends holomorphically to a 
(possibly smaller) neighbourhood  of $\overline D$.
\end{thm}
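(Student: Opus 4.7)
Let $\hat A \subset U \times \mathbb{C}^n$ denote the graph of the extending correspondence, where $U$ is a neighbourhood of $\overline D$. Replacing $\hat A$ by its irreducible component containing $\Gamma(f)$, we may assume $\hat A$ is irreducible of pure dimension $n$, and write $\pi_1 : \hat A \to U$ for the first projection, a proper finite holomorphic map of generic degree $m \ge 1$. If $m = 1$, then $\pi_1$ is birational onto the smooth space $U$ and is therefore an isomorphism, so $\hat A$ is the graph of a holomorphic map extending $f$ to $U$. The goal reduces to showing $m = 1$.

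The plan is to argue by contradiction, using Theorem~\ref{t.1} applied to the local single-valued branches of $\hat A$. At a generic point $a \in \partial D$ (chosen outside the image under $\pi_1$ of the singular locus of $\hat A$ and the ramification locus of $\pi_1$), the fibre $\hat A \cap (\{a\} \times \mathbb{C}^n)$ consists of $m$ distinct points, and the correspondence decomposes locally into $m$ single-valued holomorphic branches $f_1 = f, f_2, \ldots, f_m : V \to \mathbb{C}^n$ defined on a common neighbourhood $V$ of $a$. Since the correspondence is proper over both $\overline D$ and $\overline{D'}$ and $f$ sends $\partial D$ to $\partial D'$, each branch $f_j$ satisfies $f_j(\partial D \cap V) \subset \partial D'$ and plays the role of a local proper holomorphic map between pseudoconvex real-analytic hypersurfaces of finite type, to which Theorem~\ref{t.1} applies.

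Theorem~\ref{t.1}, applied to each $f_j$, then asserts that every irreducible $(n-1)$-dimensional component of the critical set of $f_j$ passing through $a$ meets $V \cap D$. The desired contradiction is obtained by exhibiting an $(n-1)$-dimensional component of the critical set of some branch $f_j$ that lies entirely in $V \setminus \overline D$. Such a component is extracted from the ramification divisor of $\pi_1$: since $m \ge 2$, this divisor is a nonempty complex hypersurface in $U$, and over a generic point of it two sheets of $\hat A$ collide, i.e.\ two branches $f_j, f_k$ coincide along an $(n-1)$-dimensional analytic set. Precisely this coincidence variety is shown to appear as an irreducible component of the critical set of the branch $f_j$, and, by the reflection geometry at $\partial D$, it is forced to lie on the exterior side of $\overline D$, contradicting the conclusion drawn from Theorem~\ref{t.1}.

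The principal obstacle is the last step: identifying the coincidence variety $\{f_j = f_k\}$ with an $(n-1)$-dimensional component of the critical set of one of the branches, and showing that this component sits on the exterior side of $\partial D$. This identification depends on the pseudoconvex, real-analytic, finite-type structure of $\partial D$ and $\partial D'$, which enters through the local model of the Segre varieties of the two hypersurfaces and the manner in which they govern the analytic continuation of $f$ across $\partial D$. This is the technical heart of the argument and is where the hypotheses of Theorem~\ref{t.m} are used essentially.
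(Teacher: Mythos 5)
Your proposal has the right opening move (reduce to studying the branch locus of the extending correspondence and try to bring Theorem~\ref{t.1} to bear), but it diverges from anything that can work at the two decisive points, and the step you defer as ``the technical heart'' is not a technicality --- it is the whole proof, and in the form you state it, it is aimed at the wrong conclusion. First, Theorem~\ref{t.m} does \emph{not} assume $D$ and $D'$ are pseudoconvex; they are arbitrary bounded domains with real-analytic boundaries. So Theorem~\ref{t.1} cannot be applied to $f$ or to the local branches $f_j$ of the correspondence along $\partial D$: the hypothesis of pseudoconvexity (needed for the Diederich--Fornaess exhaustion and the Hopf lemma in the proof of Theorem~\ref{t.1}) is simply not available for $\partial D$, $\partial D'$. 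The paper manufactures pseudoconvexity by passing to the auxiliary domains $D'_{M\e}=\{2x'_n+M\sum_k|\lambda'_k|^2<\e\}$ and $D_{M\e}$ built from the Segre map $\lambda'$ of the target hypersurface, and then applies Theorem~\ref{t.1} to the \emph{Segre map} $\lambda':D'_{M\e}\to\Omega'_{M\e}$ (a map into $\cx^{N'+1}$ with $N'+1\ge n$ --- this is exactly why Theorem~\ref{t.1} is stated for $n\le N$), not to branches of $F$. Since $F(S)$ lies in the $(n-1)$-dimensional part of the critical set of $\lambda'$, Theorem~\ref{t.1} plus Proposition~\ref{p.1995}(ii) gives that every component of the branch locus $S$ \emph{enters} $D$.

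Second, your plan is to reach a contradiction by exhibiting a critical component lying entirely outside $\overline D$; but the true state of affairs is the opposite (the components all enter $D$), and no contradiction with $m\ge 2$ is ever derived in the paper --- nor could your coincidence-variety argument produce one, since near the branch locus the individual branches $f_j$ are not single-valued holomorphic maps, so ``the critical set of the branch $f_j$'' containing $\{f_j=f_k\}$ is not even well defined, and you give no reason why such a set should avoid $\overline D$. What actually converts ``every component of the branch locus enters $D$'' into a holomorphic extension is the monodromy argument of Lemma~\ref{l.3}: one shows $U\setminus S$ is simply connected (where $S=E\setminus D$), continues the distinguished branch equal to $f$ single-valuedly over $U\setminus S$, and removes $S$ because it is not an analytic set. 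This lemma, together with the Diederich--Pinchuk results of Proposition~\ref{p.1995} and the continuity argument in $\e$ needed because $D'_M\cap U'$ need not sit inside $D'\cap U'$, is entirely missing from your sketch. As written, the proposal does not constitute a proof and its intended contradiction does not exist.
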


Theorem~\ref{t.m} and its generalizations have been proved in~\cite{DiPi0},~\cite{DP2004},~\cite{pv} and strongly rely on the 
proof in the case when both domains are pseudoconvex. The key for proving Theorem 2 in the pseudoconvex case is the 
$C^\infty$-smooth extension of $f$ to the closure of $D$  ( see, for instance,~\cite{bell, bc}). However, the existing proof of the  
$C^\infty$ extension is based on very technical and complicated subelliptic estimates for $\overline\partial$-Neumann operator~
\cite{k}. Here we use Theorem~\ref{t.1} to present a more elementary self-contained proof of Theorem~\ref{t.m} in general situation.  
This allows us to simplify previous proofs of the results discussed above by avoiding the use of sophisticated 
$\overline\partial$-machinery. In fact, while Theorem~\ref{t.m} is stated for simplicity as a global result, we prove a local version of it.

\section{Background: Segre varieties, the Segre map and its critical locus.}
Let $M$ be a smooth real analytic hypersurface in $\mathbb C^n$ passing through the 
origin. In a suitable coordinate system we may assume that it is given by a defining function
$$
\r(z,\bar z) = z_n + \bar z_n + \sum_{|j|,|k|>0} a_{jk}(y_n)\, 'z^j \,'\bar z^k,
$$
where $'z=(z_1, \dots ,z_{n-1})$. By the Implicit Function Theorem, the complexified equation 
$\r(z, \bar w)=0$ can be solved for $z_n$:
\begin{equation}\label{e.lam}
z_n = -{\bar w}_n + \sum_k \bar{\lam_k(w)}\,'z^k,\ \ k = (k_1, \dots, k_{n-1}).
\end{equation}
The Segre varieties are defined as $Q_w = \{z: \r(z,\bar w)=0\}$, and $M$ is called {\it essentially
finite} at zero, if the Segre map $\lam : w \to Q_w$ is finite in a neighbourhood of the origin.
The Segre map can be identified with the holomorphic map $\lambda(w) = \{\lam_k (w) \}$, where
$\lam_k$ are the components of the sum in~\eqref{e.lam}. In fact, if $M$ is essentially finite at
zero, then there exists $m>0$ such that 
$$
Q_w = Q_{\tilde w} \ \ \Longleftrightarrow \ \ \lam_k(w) = \lam_k(\tilde w), \ |k| \le m ,
$$
see \cite{DiFo} or \cite{DiPi2} for the proof. Hence, we may identify the Segre map $\lam$ with
a holomorphic map from a neighbourhood of the origin in $\mathbb C^n$ into $\mathbb C^N$,
for some $N>0$, given by 
$$
\lam(w) = \{\lam_k(w), \ |k| \le m\}.
$$

A smooth real hypersurface $M$ is of {\it finite type} (in the sense of D'Angelo) at a point $p\in M$, 
if the order of contact of $M$ with any one-dimensional complex analytic set passing through $p$ is
bounded above. If $M$ is real analytic, then $M$ is of finite type at $p$ if and only if there does not
exist a germ at $p$ of a positive dimensional analytic set  contained in $M$. In particular, this means that 
$M$ is essentially finite near $p$, and so the Segre map if finite.

\section{Proof of Theorem~\ref{t.1}}

\begin{proof}[Proof of Theorem~\ref{t.1}]
Without loss of generality we may assume that $a=0$, $f(0)=0'$, and $f(U)\subset U'$.
Clearly, $f(D\cap U) \subset D' \cap U'$ and $f(\partial D \cap U) \subset \del D' \cap U'$. 
By the result of Diederich and Fornaess \cite{DF}, for any $\e>0$  in a sufficiently small 
neighbourhood $U'$ of the origin the hypersurface $\del D' \cap U'$ admits a defining function 
$\rho'\in C^2(U')$ such that $\phi':=-(-\rho')^{1-\e}$ is a plurisubharmonic function on 
$D' \cap U'$. It follows that $\phi' \circ f$ is a negative plurisubharmonic function in $D\cap U$,
and so by the Hopf lemma there exists a constant $C>0$ such that
\begin{equation}\label{e.hopf}
|\phi' \circ f (z)| \ge C {\rm dist\,}(z, \del D), \ \  z\in \del D \cap U .
\end{equation}
Throughout the paper ${\rm dist\,}(\cdot, \cdot)$ denotes the usual Euclidean distance between
sets in a Euclidean space.
We may assume that complex tangents to $\del D$ and $\del D'$ at $0$ and $0'$ are given 
respectively by $\{z_n=0\}$ and $\{z_N=0\}$. Then it 
follows from~\eqref{e.hopf} that 
\begin{equation}\label{e.norm0}
\frac{\del f_N}{\del z_n}(0)\ne 0 .
\end{equation} 
Indeed, if otherwise
$\frac{\del f_N}{\del z_n}(0)= 0$, then $f_N(z)=O(|z|^2)$, and since $\rho'(z')=2x'_N + O(|z'|^2)$, 
we obtain
 $$
|\phi'\circ f(z)| \le c_1 |z|^{2(1-\e)}, 
 $$
 which contradicts~\eqref{e.hopf} for $\e<1/2$. In particular, we conclude that the map $f$ extends 
 to $U$ as a proper holomorphic map. This can be seen as follows:~\eqref{e.norm0} implies that $f(U\setminus D) \subset U'$,
 and therefore, $f^{-1}(\partial D \cap U') \subset \partial D$. Since $\partial D$ is of finite type, the set
 $f^{-1}(0')$ is discrete, and, after shrinking if necessary the neighbourhood $U$, we may assume that the
 map $f$ is proper in $U$. 
 
By Remmert's proper mapping theorem  $E' = f(E)\subset U'$ is an irreducible analytic set of dimension $n-1$. 
To illustrate the idea of the proof of the theorem consider first the simple case 
 when  $E$ and $E'$ are complex manifolds. Arguing by contradiction suppose that 
$E \cap (D\cap U) =\varnothing$ for arbitrarily small $U$. Then $E$ is tangent to $\del D$ at the origin.
Since $E' \cap (D' \cap U')$ is also empty, the manifold $E'$ is tangent to $\del D'$ at $0'$. After an 
additional local biholomorphic change of coordinates we may assume that $E=\{z_n=0\}$ and 
$E'=\{z'_N=0\}$. Let $z=(\tilde z, z_n)$, $z'=(\tilde z', z'_N)$, and $f=(\tilde f, f_N)$. Then the
restriction $f|_E$ is given by $\tilde z'=\tilde f(\tilde z, 0)$. Since $f$ is proper at the origin, $f|_E$ is
also proper at~$0$, and therefore the rank of the Jacobian matrix $\frac{\del \tilde f}{\del \tilde z}(\tilde z,0)$
is equal to $n-1$ on a dense subset $E_1\subset E$. On the other hand, ${\rm rank} \frac{\del f}{\del z}<n$
for $z=(\tilde z,0)$, and $\frac{\del f_n}{\del z_j}(\tilde z,0)=0$, $j=1,\dots, n-1$, because $f_n(\tilde z,0)=0$.
Therefore, $\frac{\del f_N}{\del z_n}(\tilde z,0) =0$ for $(\tilde z,0) \in E_1$. By  continuity, 
\begin{equation}\label{e.der0}
\frac{\del f_N}{\del z_n}(0)=0,
\end{equation}
which contradicts \eqref{e.norm0}.

For the proof in the general case we will need the following technical result. We denote by ${\rm reg\,} E$ the 
locus of regular points of a complex analytic set $E$, i.e., the points near which $E$ is locally a complex manifold.
Then ${\rm sing\,} E = E \setminus {\rm reg\,} E$ is the singular locus of $E$.

\begin{propos}\label{p.1}
There exist a sequence of points $\{p^\nu\}\subset {\rm reg\,} E$ and two sequences of complex affine maps
$A^\nu :\cx^n \to \cx^n$, $B^\nu: \cx^N \to \cx^N$ such that for every $\nu=1,2,\dots$, the 
following holds
\begin{enumerate}
\item[(i)] ${\rm rank}(f|_E) = n-1$ at $p^\nu$, and $f(p^\nu) \in {\rm reg\,} E'$.
\item[(ii)] $A^\nu(p^\nu)=p^\nu$ and $B^\nu(f(p^\nu)) = f(p^\nu)$.
\item[(iii)] The transformations $A^\nu$, $B^\nu$ converge to the identity maps $I_n: \cx^n \to \cx^n$
and $I_N: \cx^N \to \cx^N$ respectively.
\item[(iv)] $dA^\nu$ maps $T_{p^\nu}E$ onto $\{v \in T_{p^\nu}\cx^n: v_n =0\}$ and $dB^\nu$ maps
$T_{f(p^\nu)}E'$ onto $\{v \in T_{f(p^\nu)}\cx^N: v_N =0\}$.
\end{enumerate}
\end{propos}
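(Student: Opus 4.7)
My plan is to reduce the general case to the manifold case above by finding regular points $p^\nu\to 0$ at which $E$ and $E'$ look, after an arbitrarily small affine correction, like the model situation $E=\{z_n=0\}$, $E'=\{z'_N=0\}$ of the simple case.

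\emph{Finding the sequence.} I would let
\[
E_{\mathrm{gen}}=\bigl\{\,p\in{\rm reg\,} E\,:\,{\rm rank\,}(f|_E)(p)=n-1,\ f(p)\in{\rm reg\,} E'\,\bigr\}
\]
and verify that $E_{\mathrm{gen}}$ is the complement in $E$ of a proper analytic subset. Indeed, since $f|_E:E\to E'$ is a proper holomorphic map between irreducible analytic sets of the same dimension $n-1$, its rank-deficient locus is a proper analytic subset of ${\rm reg\,} E$, while ${\rm sing\,} E$ and $f^{-1}({\rm sing\,} E')$ are proper analytic subsets of $E$. Thus $E_{\mathrm{gen}}$ accumulates at $0\in E$, and I can pick $p^\nu\in E_{\mathrm{gen}}$ with $p^\nu\to 0$; property (i) is automatic.

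\emph{Tangent directions.} Next I would use the contradiction hypothesis $E\cap(D\cap U)=\varnothing$, which gives $\rho\ge 0$ on $E$, where $\rho(z)=z_n+\bar z_n+O(|z|^2)$. For any $v=\lim t_k z_k$ in the Whitney tangent cone of $E$ at $0$ (with $z_k\in E\setminus\{0\}\to 0$ and $t_k>0$), rescaling the inequality $\rho(z_k)\ge 0$ yields $\Re\,v_n\ge 0$. A subsequential limit $T$ of $T_{p^\nu}E$ is a complex $(n-1)$-plane contained in this cone, and since $v\in T$ forces $-v,\pm iv\in T$, applying the inequality to all four vectors gives $v_n=0$. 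Hence $T\subset\{z_n=0\}$, and by dimension $T=\{z_n=0\}$, so $T_{p^\nu}E\to\{z_n=0\}$ along the full sequence. Applying the same argument to $E'$, which satisfies $E'\cap D'=\varnothing$ because \eqref{e.norm0} forces $f(U\setminus D)\subset U'\setminus D'$, gives, after passing to a subsequence, $T_{f(p^\nu)}E'\to T^\ast$ for some $(n-1)$-plane $T^\ast\subset\{z'_N=0\}$. I expect this tangent-cone step to be the main obstacle; everything else is routine bookkeeping.

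\emph{Constructing the affine maps.} Finally, since $T_{p^\nu}E$ is close to $\{z_n=0\}$, in the coordinates $(\,'z,z_n)$ it is the graph of a small linear functional $g^\nu:\cx^{n-1}\to\cx$ with $g^\nu\to 0$. I would set $S^\nu(\,'z,z_n)=(\,'z,z_n-g^\nu(\,'z))$ and
\[
A^\nu(z)=S^\nu(z)+\bigl(p^\nu-S^\nu(p^\nu)\bigr),
\]
which is affine, fixes $p^\nu$, carries $T_{p^\nu}E$ onto $\{v_n=0\}$, and tends to $I_n$ as $g^\nu\to 0$. The map $B^\nu$ would be constructed analogously by writing $T_{f(p^\nu)}E'$ as the graph of a small linear functional over its projection to the first $N-1$ coordinates, applying the resulting shear, and adding the translation that fixes $f(p^\nu)$. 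Conditions (ii)--(iv) then follow immediately.
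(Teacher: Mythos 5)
Your reduction to the model case and the construction of the shears $A^\nu$, $B^\nu$ match the paper, and your dense set $E_{\mathrm{gen}}$ is essentially the paper's $E\setminus E_1$. The gap is exactly where you suspected it: the assertion that a subsequential limit of the tangent planes $T_{p^\nu}E$ is contained in the tangent cone of $E$ at $0$ (the cone of limits $\lim t_k z_k$). For a singular analytic set this is false in general: limits of tangent planes at regular points form the larger Whitney cone $C_4$, which can strictly contain the cone $C_3$ of limits of secants. A standard example is $\{z_3^2=z_1z_2^2\}\subset\cx^3$, whose tangent cone at $0$ is $\{z_3=0\}$, while along the regular points $(t^2,t,t^2)$ the tangent planes converge to $\{-v_1+2v_3=0\}\not\subset\{v_3=0\}$. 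So the maximum-principle argument, which correctly gives $C_0(E)\subset\{z_n=0\}$, does not by itself control $T_{p^\nu}E$ for an arbitrary sequence $p^\nu\in E_{\mathrm{gen}}$ tending to $0$; this step is the entire technical content of the proposition and cannot be dispatched in one line.

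This is precisely what the paper's Lemma~\ref{l.one} supplies, and note that it does \emph{not} claim flattening of the tangent planes along every sequence of regular points: it produces a particular open set $V\subset{\rm reg\,}A$ with $0\in\overline V$ on which flattening holds. The construction realizes $A$ as a branched cover over a conical region $\tilde V_\delta$ kept at distance comparable to $|p_1|$ from the discriminant, writes each sheet there as a graph $z_n=h_n(\tilde z)$ with $|h_n|=o(|\tilde z|)$, and deduces $|dh_n(\tilde p)|\to 0$ from the Schwarz/Cauchy estimate on a ball of radius $c|p_1|$ centered at $\tilde p$ --- the quantitative distance to the discriminant is what makes that estimate work. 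You would also need to reconcile this with your choice of $p^\nu$: since one cannot prescribe in advance a sequence in $E$ at which the tangent planes flatten, the paper applies the lemma only to $E'$ (choosing $p'^\nu=f(p^\nu)$ inside the good set $V'\subset{\rm reg\,}E'$ and off the bad set $f(E_1)$) and then \emph{transfers} the conclusion to $T_{p^\nu}E$ via the inclusion $df_{p^\nu}(T_{p^\nu}E)\subset T_{p'^\nu}E'$ combined with $f_N(z)=\mu z_n+o(|z|)$, $\mu\ne 0$. Without Lemma~\ref{l.one} or an equivalent, your tangent-direction step does not go through.
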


Theorem~\ref{t.1} can be easily deduced from Proposition~\ref{p.1}. Indeed, consider the 
sequence of maps 
$f^\nu = B^\nu \circ f \circ (A^\nu)^{-1}$. The above arguments show that 
$$
\frac{\del f^\nu_N}{\del z_n}(p^\nu) \to 0 \ \ {\rm as \ } \nu\to\infty,
$$
which yields~\eqref{e.der0}. Again, we obtain a contradiction with the Hopf lemma.
\end{proof}

The rest of the section is devoted to the proof of Proposition~\ref{p.1}. We will need the following

\begin{lem}\label{l.one}
Let $U\subset \cx^n$ be a neighbourhood of the origin, $M\ni 0$ be a real hypersurface
in $U$ with a defining function $\rho \in C^1(U)$, 
\begin{equation}\label{e.r1}
\rho(z)=2x_n + o(|z|) .
\end{equation}
Let $A\subset U$ be an analytic set of pure dimension $d$, $1\le d < n$, such that $0\in A \subset \{z\in U :
\rho(z) \ge 0\}$. Then there exists an open subset $V\subset {\rm reg\,}A$ with $0\in \overline V$
such that for any point $p\in V$ the tangent plane $T_p A$ is contained in a complex hyperplane
$$
L_p = \{v\in\cx^n: v_n = \sum_{k=1}^{n-1} a_k(p) v_k\},
$$
and $\lim_{V\ni p \to 0} a_k(p)=0$ for any $k=1,2,\dots, n-1$.
\end{lem}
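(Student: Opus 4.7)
The plan, organized in three stages, uses the tangent cone $C_0(A)$ of $A$ at the origin as the central geometric object. \emph{Stage 1 (horizontality):} Any $v \in C_0(A)$ is a limit $v = \lim_\nu t_\nu z_\nu$ with $z_\nu \in A$, $t_\nu > 0$, $z_\nu \to 0$. The inequality $\r(z_\nu) \ge 0$ combined with $\r(z) = 2x_n + o(|z|)$ gives $\Re z_n^\nu \ge -o(|z_\nu|)$, hence $\Re v_n \ge 0$. Since the tangent cone of a complex analytic set is $\cx^*$-invariant, the same bound applied to $e^{i\theta}v$ for every $\theta$ forces $v_n = 0$. Thus $C_0(A) \subset \{z_n = 0\}$; in particular $|p_n|/|p| \to 0$ as $A \ni p \to 0$.

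\emph{Stage 2 (selection of $V$):} Define $V := \{p \in {\rm reg\,}A : \dist(p, {\rm sing\,}A) \ge |p|/C\}$ for a large constant $C$; this set is open in ${\rm reg\,}A$. To see $0 \in \overline V$, decompose the germ $(A,0)$ into irreducible components $A_i$. Each $A_i$ has $\dim C_0(A_i) = d$, whereas $\dim C_0({\rm sing\,}A_i) \le d-1$; directions in $C_0(A_i) \setminus C_0({\rm sing\,}A_i)$ are realized, via local Puiseux parameterization, by sequences of regular points $p^\nu \to 0$ with $\dist(p^\nu, {\rm sing\,}A_i) \gtrsim |p^\nu|$, and for $C$ suitably large such $p^\nu$ lie in $V$.

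\emph{Stage 3 (derivative estimate):} Fix $p \in V$ small and a unit $v \in T_p A$. Since $A$ is smooth at $p$ with $\dist(p, {\rm sing\,}A) \ge |p|/C$, a standard parameterization of $A$ at its regular point gives a holomorphic disc $\gamma : \Delta_R \to A$ with $R \ge |p|/C'$, $\gamma(0) = p$, $\gamma'(0) = v$. The function $f(t) := z_n(\gamma(t))$ is holomorphic on $\Delta_R$ with $f(0) = p_n$, $f'(0) = v_n$, and $\Re f(t) \ge -\e(|p|+|t|)(|p|+|t|)$ for some $\e \to 0$. Borel--Carath\'eodory applied to $-f$ followed by Cauchy's inequality at $t=0$ gives $|v_n| \lesssim \e(|p|) + |p_n|/|p|$, which tends to $0$ by Stage~1. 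As this bound is uniform over unit $v \in T_p A$, we have $e_n \notin T_p A$ for $p$ small, so $T_p A$ is contained in a hyperplane $L_p$ of the stated form; choosing $a_k(p)$ as the minimum-norm extension of the linear form $v \mapsto v_n$ from $T_p A$ to $\cx^{n-1}$ gives $a_k(p) \to 0$.

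The main obstacle I anticipate is Stage~2 when $0$ is a positive-dimensional singular point of $A$: one must locate regular points approaching $0$ along directions not swept by ${\rm sing\,}A$, so that $\dist(p, {\rm sing\,}A) \gtrsim |p|$ is actually attainable along a sequence. This rests on the dimension inequality $\dim C_0({\rm sing\,}A) < d = \dim C_0(A)$ combined with a Puiseux realization of tangent directions by regular-point sequences in each irreducible component; once this is in hand, Stages~1 and~3 are essentially routine applications of the tangent-cone formalism and the Borel--Carath\'eodory inequality.
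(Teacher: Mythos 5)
Your overall strategy is the same as the paper's: show $C_0(A)\subset\{z_n=0\}$ (your Stage~1 is exactly the paper's argument, with $\cx^*$-invariance of the cone in place of the explicit maximum-principle phrasing), and then convert the smallness of $z_n$ on $A$ into smallness of $dz_n|_{T_pA}$ by a Schwarz-type estimate on holomorphic discs of size comparable to $|p|$. The gap is in Stage~3, and it is not a technicality: the existence of a disc $\gamma:\Delta_R\to A$ with $R\ge |p|/C'$, $\gamma(0)=p$, prescribed \emph{unit} tangent $\gamma'(0)=v$, together with the bound $|\gamma(t)|\lesssim |p|+|t|$ that you implicitly use to control $\Re f$ on all of $\Delta_R$, does not follow from $p\in{\rm reg\,}A$ and $\dist(p,{\rm sing\,}A)\ge |p|/C$. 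A closed complex submanifold of a ball $B(p,r)$ through $p$ need not contain tangent discs of radius comparable to $r$: for $X_\e=\{w=z^2/\e\}\cap B(0,1)$, any holomorphic disc in $X_\e$ with $\gamma(0)=0$ and $\gamma'(0)=(1,0)$ has radius at most $\sqrt{\e}$ by the Schwarz lemma, although $X_\e$ is closed, smooth, and ${\rm sing\,}X_\e=\varnothing$. So ``regular and conically far from ${\rm sing\,}A$'' is the wrong selection criterion: it does not exclude points where $A$ curves on a scale much finer than $|p|$, or where distinct sheets of $A$ nearly collide, and at such points the disc with the stated radius and distortion need not exist. Your appeal to ``a standard parameterization'' asserts precisely the statement that has to be proved.

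The paper's proof is exactly the construction that fills this hole. One chooses a linear projection $\pi:\cx^n\to\cx^d$ whose kernel meets $C_0(A)$ only at the origin, so that $\pi|_A$ is a branched cover, and then selects $p$ so that $\pi(p)$ lies in a cone avoiding the tangent cone of the \emph{discriminant set} $\tilde\sigma$ of this cover --- not merely ${\rm sing\,}A$; the discriminant also records collisions of sheets and points where $T_pA$ meets $\ker\pi$. Over the resulting conical region $\tilde V_\delta$ the set $A$ splits into graphs $\tilde z\mapsto(\tilde z,H(\tilde z))$ defined on balls of radius $c_4|p_1|\sim |p|$ in $\cx^d$, with $|H|=O(|p|)$ there (from the tangent-cone estimates \eqref{e.four}--\eqref{e.5}) and hence $\|dH\|=O(1)$ by Cauchy; this is what yields discs of radius $\sim|p|$ in every tangent direction with bounded distortion, and the Schwarz estimate on $h_n$ then gives \eqref{e.eight}. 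If you replace your Stage~2 criterion by conical avoidance of $C_0(\tilde\sigma)$ for such a projection and derive the disc from the graph representation, your Stage~3 (Borel--Carath\'eodory or plain Cauchy, either works) goes through; as written, the key geometric input is assumed rather than established.
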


\begin{proof}
Let $C_0(A)$ be the tangent cone of $A$ at $0$. It is defined by $C_0(A)= \lim_{t\to 0} A_t$, where 
$A_t = \{ tz: z\in A\}$, $t\in \rl_+$, are isotropic dilations of $A$. The set $C_0(A)$ is a complex
cone of dimension $d$, i.e., it is invariant under complex dilations $z\to tz$, $t\in \cx \setminus \{0\}$
(see, e.g.,~\cite{Ch}) and $0\in C_0(A) \subset\{z_n \ge 0\}$. The last inclusion follows from
$A_t \subset \{z: t \rho(z/t) \ge 0\}$ and $t\rho(z/t) \to 2x_n$ as $t\to \infty$ because of~\eqref{e.r1}.
By the maximum principle we conclude that 
\begin{equation}\label{e.2}
C_0(A) \subset \{z_n =0 \}.
\end{equation}
Since $\dim C_0(A)=d$, there exists a complex plane $L\ni 0$, $\dim L = n-d$, such that $L \cap C_0(A)= \{0\}$.
Without loss of generality we assume that 
\begin{equation}\label{e.3}
L = \{z\in \cx^n : z_1=0, \dots, z_d = 0\} .
\end{equation}
Let $\tilde z = (z_1, \dots, z_d)$, $\tilde{\tilde z} =  (z_{d+1}, \dots, z_{n-1})$ so that 
$z = (\tilde z, \tilde{\tilde z}, z_n)$. It follows from~\eqref{e.2} that $|z_n| = o(|\tilde z| + |\tilde{\tilde z}|)$
on $A$, i.e., there exists a continuous function $\alpha(t) \ge 0$ for $t\ge 0$ such that 
\begin{equation}\label{e.four}
|z_n| \le \alpha(|\tilde z| + |\tilde{\tilde z}|) (|\tilde z| + |\tilde{\tilde z}|), \ \ z\in A .
\end{equation}
We also have the following estimate for some $c_1>0$ and all $z\in C_0(A)$:
\begin{equation}\label{e.5}
|z_n| + |\tilde{\tilde z}| \le c_1|\tilde z|,
\end{equation}
which follows from $L \cap C_0(A) = \{0\}$ and~\eqref{e.3}. This implies that the origin is an isolated
point of $L\cap A$. Hence,~\eqref{e.5} also holds for $z\in A$, possibly with a different $c_1$. 

Now we can choose 
$$
U = \tilde U \times \tilde{\tilde U} \times U_n \subset \cx^d \times \cx^{n-d-1} \times \cx
$$
such that $\pi: A\cap U \to \tilde U$ is a branched analytic covering of some multiplicity $m\ge 1$. Its 
discriminant set $\tilde \sigma \subset \tilde U$ and the tangent cone $C_0(\tilde \sigma) \subset \cx^d$
are analytic sets of dimension at most $d-1$. Therefore, there exists a complex line $\tilde l \subset \cx^d$
such that $C_0(\tilde \sigma) \cap \tilde l = \{0\}$. We may assume that 
$\tilde l = \{(z_1,0,\dots,0) \in \cx^d : z_1\in \cx\}$. 
Since $C_0(\tilde \sigma)$ is a closed cone, there exists $\delta >0$ such that
\begin{equation}\label{e.6}
\{\tilde z \in \cx^d : |z_j| < \delta|z_1|, \ j=2,\dots, d\}\cap C_0(\tilde \sigma) = \varnothing .
\end{equation}
With possibly smaller $\delta>0$ we also have 
\begin{equation}\label{e.7}
\{ \tilde z \in \tilde U : |z_j| < \delta |z_1|, \ j=2,\dots, d\}\cap \tilde \sigma = \varnothing .
\end{equation}
The set 
$$
\tilde V_\delta :=\{ \tilde z \in \tilde U : |z_j| < \delta |z_1|,\ j=2,\dots, d\}\cap \{\tilde z \in \tilde U: {\rm Re\,}z_1>0\}
$$
is simply connected, open in $\tilde U$ and contains the origin in its closure. Since $\tilde V_\delta \cap \tilde \sigma =
\varnothing$ the set $A \cap (\tilde V_\delta \times \tilde{\tilde U} \times U_n)$ is the union of the graphs of 
$m$ holomorphic mappings $\tilde V_\delta \to \tilde{\tilde U} \times U_n$. Consider one of them, $H=(\tilde{\tilde h}, h_n)$,
and let $A_\delta = A \cap (\tilde V_\delta \times \tilde{\tilde U} \times U_n)$ be its graph. For any 
$p=(\tilde p, \tilde{\tilde p},p_n) \in A_\delta$ the tangent plane $T_p A$ is contained in the tangent plane at $p$ to 
the hypersurface in $\tilde V_\delta \times \tilde{\tilde U} \times U_n$ defined by one equation $z_n = h_n (\tilde z)$,
which is given by 
$$
\left\{v \in \cx^n : v_n = \sum_{k=1}^d a_k(\tilde p) v_k \right\}, \ \  a_k(\tilde p) = \frac{\del h_n}{\del z_k}(\tilde p).
$$
Thus, to finish the proof of the lemma, it is sufficient to show that
\begin{equation}\label{e.eight}
\lim_{\tilde V_\delta \ni \tilde p \to 0} \frac{\del h_n}{\del z_k} (\tilde p) = 0, \ \ k=1,\dots,d .
\end{equation}
Using~\eqref{e.four}--\eqref{e.7} we successively obtain for certain constants $c_j>0$ and all $\tilde p \in V_\delta$,
with $\delta <<1$, the following estimates:
\begin{eqnarray*}
|p_1| \le |\tilde p| \le c_1 |p_1|,\\
{\rm dist\ }(\tilde p, c_0(\tilde\sigma)) \ge c_2|\tilde p| \ge c_2 |p_1|,\\
{\rm dist\ } (\tilde p, \tilde \sigma) \ge c_3 |p_1|.
\end{eqnarray*}
If $B(\tilde p, \tilde \sigma)$ denotes the ball $\{\tilde z \in \mathbb C^d : |\tilde z - \tilde p| < r\}$,
then $B(\tilde p, c_4 |p_1|)\subset \tilde V_\delta$, and $|\tilde z| \le c_5 |p_1|$ for all 
$\tilde z\in B(\tilde p, c_4 |p_1|)$. For $z\in A$ with $\tilde z \in B(\tilde p, c_4 |p_1|)$ we have
$$
|h_n(\tilde z)| = |z_n| \le \alpha \left(|\tilde z| + |\tilde{\tilde z} |\right)\left(|\tilde z| + |\tilde{\tilde z}|\right) \le
c_5\,\alpha\left(c_5|\tilde z|\right) \,|\tilde z| \le c_6\, \alpha \left(c_6 \alpha|p_1|\right)\, |p_1|.
$$
Now by the Schwarz lemma applied to $h_n(\tilde z)$ in $B(\tilde p, c_4 |p_1|)$ we get
$$
\left| \frac{\partial h_n}{\partial z_k}(\tilde p) \right| \le c_7\, \alpha (c_6 |p_1|) ,
$$
and \eqref{e.eight} follows from $\lim_{t\to 0^+} \alpha(t) =0$.
\end{proof}

\begin{proof}[Proof of Proposition~\ref{p.1}.]
The set
$$
E_1:=\left\{z\in {\rm reg\,}E: {\rm \ rank\,}(f|_E) < n-1 {\rm\  at\ } z\right\} \cup {\rm sing\,}E
$$
is nowhere dense and closed in $E$. Therefore, $E'_1: = f(E)$ is closed and nowhere dense in $E'$.
By Lemma~\ref{l.one} with $A=E'$ and $M=\partial D'$ there exist a sequence $p'^\nu \in {\rm reg\,}E'$
and a sequence $p^\nu \in {\rm reg\,}E$ such that
\begin{itemize}
\item[(a)] $p'^\nu = f(p^\nu)$,
\item[(b)] $\lim_\nu p^\nu = 0$, $\lim_\nu p'^\nu = 0'$,
\item[(c)] ${\rm rank\,}(f|_E) = n-1$ at each $p^\nu$,
\item[(d)] for every $\nu$,
\begin{equation}\label{e.nine}
T_{p'^\nu} E' \subset \left\{v \in \mathbb C^N : v'_N = \sum_{k=1}^{N-1} a'_{k\nu}\,v'_{k} \right\}
\end{equation}
and 
\begin{equation}\label{e.ten}
\lim_{\nu\to\infty} a'_{k\nu} = 0, {\rm \ for\ any\ \ } k=1,\dots,N-1.
\end{equation}
\end{itemize}
We claim that 
\begin{equation}\label{e.eleven}
T_{p^\nu} E \subset \left\{v \in \mathbb C^n : v_n = \sum_{k=1}^{n-1} a_{k\nu}\,v_{k} \right\}
\end{equation}
with 
\begin{equation}\label{e.twelve}
\lim_{\nu\to\infty} a_{k\nu} = 0,\ \  k=1,\dots, n-1 .
\end{equation}
Since $f$ is holomorphic near the origin and sends
$\partial D$ into $\partial D'$, the last component $f_N$ of $f$ is of the form
\begin{equation}\label{e.thirteen}
f_N(z) = \mu\, z_N + o(|z|),
\end{equation}
where $\mu \ne 0$ by the Hopf lemma. The equations of $T_{p^\nu}$ can be obtained from 
$df_{p^\nu}(T_{p^\nu}E) \subset T_{p'^\nu} E'$. Using~\eqref{e.nine},~\eqref{e.ten}, and~\eqref{e.thirteen}
we conclude that $T_{p^\nu}E$ are of the form~\eqref{e.eleven} and the coefficients $a_{k\nu}$ satisfy~\eqref{e.twelve}
because of~\eqref{e.ten} and~\eqref{e.thirteen}. The transformations $A^\nu$ and $B^\nu$ can be defined by
\begin{eqnarray*}
A^\nu: (z_1, \dots, z_{n-1}, z_n) \mapsto \left(z_1, \dots, z_{n-1}, z_n - 
\sum_{k=1}^{n-1} a_{k\nu}(z_k - p_k^\nu)\right),\\
B^\nu: (z'_1,\dots, z'_{N-1}, z'_N) \mapsto 
\left(z'_1,\dots, z'_{N-1}, z'_N - \sum_{k=1}^{N_1} a'_{k\nu} (z'_k - p'^\nu_k) \right) .
\end{eqnarray*}
They satisfy the required properties, and this completes the proof of the proposition and Theorem~\ref{t.1}.
\end{proof}

\section{Proof of Theorem~\ref{t.m}}

The crucial step in the proof of Theorem~\ref{t.m} is the following 

\begin{lem}\label{l.3}
If in the situation of Theorem~\ref{t.m} every irreducible component $E\ni a$ of a branch
locus of the correspondence that extends $f$ enters $D$ at $a$, then $f$ extends holomorphically
to $a$.
\end{lem}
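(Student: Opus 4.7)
My plan is to argue the contrapositive: suppose $f$ does not extend holomorphically to any neighborhood of $a$, and produce an irreducible $(n-1)$-dimensional component of the branch locus through $a$ that is disjoint from $D\cap U$ for arbitrarily small $U$. Near $a$ I would represent the correspondence $\hat f$ by an analytic set $A\subset U\times U'$ with $\pi_1\colon A\to U$ a proper finite surjection of some degree $m$, and parametrize the sheets of $\hat f$ as multivalued holomorphic maps $f_1=f, f_2, \ldots, f_m$, single-valued on simply connected subsets of $U\setminus V$ (with $V$ the branch locus) and permuted under monodromy around components of $V$.

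The key geometric input is that for each $i\ge 2$ the auxiliary sheet $f_i$ satisfies $f_i(D\cap U)\subset U'\setminus\overline{D'}$. This combines the defining property that $\hat f$ extends $f$ --- the only $w$-values of $\hat f$ over $z\in D$ lying in $D'$ are $f(z)$, forcing each auxiliary sheet to avoid $D'$ over $D$ --- with the boundary-matching property that every sheet sends $\partial D\cap U$ into $\partial D'\cap U'$ (a structural feature of the Segre-variety construction of $\hat f$); a connectedness argument then places $f_i(D\cap U)$ entirely on the exterior side. Now if $f$ fails to extend at $a$, the sheet $f_1$ is not separated from the other sheets at $(a,f(a))$, so there exists some $i\ge 2$ for which the monodromy around a component $E\subset V$ through $a$ permutes $f_1$ with $f_i$. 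On $E$ the two sheets coalesce, i.e.\ $f_1\equiv f_i$ along $E$ in the sense of continuous extension. For $z\in E\cap(D\cap U)$ this would force $f(z)=f_1(z)=f_i(z)\in U'\setminus\overline{D'}$, contradicting $f(z)\in D'$; hence $E\cap(D\cap U)=\varnothing$, contradicting the hypothesis that $E$ enters $D$ at $a$.

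The principal obstacle is the claim that the auxiliary sheets avoid $\overline{D'}$ over $D$. This splits into two sub-statements, each requiring careful justification: first, that every sheet of the correspondence respects the boundary correspondence $\partial D\leftrightarrow\partial D'$, which relies on the Segre-reflection construction underlying the extension; and second, that the sole sheet whose $w$-values over $D$ lie in $D'$ is $f$ itself, which is the content of ``$\hat f$ extends $f$''. Once these structural facts are in place, the identification of the bad component $E$ and the contradiction with the hypothesis are essentially formal bookkeeping.
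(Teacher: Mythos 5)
Your proof hinges on the claim that each auxiliary sheet satisfies $f_i(D\cap U)\subset U'\setminus\overline{D'}$ for $i\ge 2$, and this claim is false. That $F$ ``extends $f$'' means only that the graph of $f$ is contained in the analytic set defining the correspondence; it does not say that $f(z)$ is the unique value of $F(z)$ lying in $D'$ for $z\in D$. In fact, the correspondences to which the lemma is applied satisfy the invariance property $F(D\cap U)\subset D'\cap U'$ (Proposition~\ref{p.1995}(ii), quoting Prop.~7.1 of~\cite{DiPi3}): \emph{all} values of $F$ over a point of $D$ lie in $D'$. This is the expected behaviour; for instance, for the inverse correspondence of a branched proper self-map, every sheet over the target lands back in the source, not outside it. With all sheets of $F$ mapping $D$ into $D'$, the coalescence of $f_1$ and $f_i$ at a point $z\in E\cap D$ yields no contradiction --- both values lie in $D'$ and are simply equal there --- so your contrapositive argument collapses at its final step.

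The hypothesis that every component of the branch locus enters $D$ is used in the paper for a different, purely topological purpose. Set $S=E\setminus D$. By the van Kampen--Zariski theorem, $\pi_1(U\setminus E)$ is generated by small loops around regular points of the components of $E$; since each component meets $D$ in a nonempty open subset of itself, each such loop can be slid along the regular locus into $D$, where continuation of $f$ around it is trivial because $f$ is already single-valued and holomorphic on all of $D\cap U$. This shows both that $U\setminus S$ is simply connected and that $f$ continues analytically, hence by the monodromy theorem single-valuedly, to $U\setminus S$; finally $S$, being a proper closed non-analytic subset of the $(n-1)$-dimensional set $E$, is a removable singularity. If you wish to keep your sheet-theoretic language, the correct statement is not that $f_1$ cannot coalesce with another sheet over $E\cap D$, but that the monodromy of the branch $f_1$ around each component of $E$ is trivial because it can be computed at points of $E\cap D$, where $f_1=f$ is unbranched.
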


\begin{proof} 
Let $U$ be a small neighbourhood of $a$, and $F: U \to \mathbb C^n$ be the correspondence that 
extends the map $f$ near the point $a$. Let $E$ be the branch locus of $F$ in $U$. Then $E$ is a
complex analytic set of pure dimension $n-1$. Since every component of $E$ enters the domain $D$ at $a$, 
we may choose the neighbourhood $U$ so small that for every irreducible component $\tilde E$ of $E$,
the set $\tilde E \cap D$ is nonempty and open in $\tilde E$. 

Let $S = E \setminus D$. We claim that $U\setminus S$ is simply connected. For the proof we will show 
that every nontrivial cycle in $U\setminus E$ is null-homotopic in $U\setminus S$, from this simple connectivity 
of $U\setminus S$ follows. By the classical van Kampen-Zariski Theorem, see, e.g., \cite{Dim}, the fundamental 
group of $U\setminus E$ is generated by the cycles that generate the fundamental group of $L \setminus (E \cap L)$, 
where $L$ is a complex line intersecting $E$ transversely and avoiding singular points of $E$. Let $\gamma$ be a 
generator of $\pi_1(L \setminus (E \cap L))$. Then $\gamma$ is homotopic to a small circle in $L$ around a point 
$p$ of the intersection of $L$ with an irreducible component $\tilde E$ of $E$. Further, the point $p$ is a regular 
point of $\tilde E$, and $\gamma \cap E = \varnothing$. Since the locus of regular points of $\tilde E$ is connected 
and $\tilde E \cap D$ contains an open subset of $\tilde E$ by the assumptions of the lemma, we can move the cycle 
$\gamma$ along the locus of smooth points of $\tilde E$ avoiding points in $E$ until $\gamma$ is entirely contained 
in $D$. This means that $\gamma$ is null-homotopic in $U\setminus S$, and hence the latter is simply connected.

We next show that the map $f$ defined in $D\cap U$ extends as a holomorphic map along any path in 
$U \setminus S$. Indeed, on $U\setminus E$ the correspondence $F$ splits into a finite collection of holomorphic 
mappings, the branches of $F$. Fix a point $b\in  (U \cap\partial D) \setminus E$. Then one of the branches of the correspondence $F$  at $b$ gives the extension of the map $f$ to a neighbourhood of $b$. Taking any path 
$\gamma$ in $U\setminus E$ which starts at $b$ we obtain the extension of $f$ along~$\gamma$ by choosing 
the appropriate branches of $F$ over the points in $\gamma$. This gives analytic continuation of $f$ in the 
complement of $E$ in $U$. Suppose now that $\gamma$ intersects $E\cap D$. Without loss of generality assume 
that $\gamma$ terminates at a point $c\in E\cap D$ and $\gamma \setminus \{c\} \subset U \setminus E$. The set 
$S$ is closed and has simply connected complement in $U$, hence, any two paths in the complement of $S$ are
homotopically equivalent. In particular, this means that the path $\gamma$ can be homotopically deformed
avoiding the set $S$ so that the deformation $\tilde \gamma$ of $\gamma$ connects the points $b$ and $c$
along the path that is entirely contained in $D \setminus E$ (except the end points). Furthermore, we claim that 
this can be done in such a way that no curve in the deformation family intersects~$E$ (except the end point). 
Indeed, consider the cycle $\gamma \circ \tilde \gamma^{-1}$ which we slightly deform so that it does not 
intersect~$E$ near the point $c$. If $\gamma \circ \tilde \gamma^{-1}$ is null-homotopic in $U\setminus E$, 
then the claim is trivial. If $\gamma \circ \tilde \gamma^{-1}$ is a nontrivial cycle in $U\setminus E$, then as in 
the proof of simple connectivity of $U\setminus S$, we may represent this cycle as a sum of ``small'' cycles around
smooth points of $E$. We then move these small cycles along the regular locus of $E$ until all of them are 
contained in $D$ (again we used the fact that every component of $E$ enters the domain $D$). As a result we
conclude by the Monodromy theorem that the analytic continuation of $f$ along $\gamma$ and $\tilde \gamma$
defines the same analytic element near the point $c$. But since $\tilde \gamma$ is contained in $D$, extension
along $\tilde \gamma$ simply gives the map $f$ already defined at $c$. This gives analytic continuation
of $f$ along any path in $U \setminus S$, which is single-valued by the Monodromy theorem. 

Finally, since every component of $E$ enters the domain $D$
at $a$, the set $S$ is not a complex analytic subset of $U$, and hence it is a removable singularity for the
extension of $f$ in $U\setminus S$. This shows that $f$ extends to $a$ as a holomorphic map.
\end{proof}

\begin{proof}[Proof of Theorem~\ref{t.m}]
Choose normal coordinates near the points $a$, $f(a)$ and assume $a=0$, $f(a)=0'$. By $\rho$ and $\rho'$
we denote local defining functions of $D\cap U$, and $D'\cap U'$ respectively, of the form
\begin{eqnarray}
\label{e.rho}
\rho(z,\bar z) = 2 x_2 + \sum_{|k|,|l|\ge 1} a_{kl}(y_n) \tilde z^k \overline{\tilde z}^l,\\
\label{e.rho'}
\rho'(z',\bar z') = 2 x'_2 + \sum_{|k|,|l|\ge 1} a'_{kl}(y'_n) \tilde z'^k \overline{\tilde z'}^l,
\end{eqnarray}
Let $\lambda : U \to \mathbb C^{N+1}$, $\lambda': U' \to \mathbb C^{N'+1}$ be the Segre maps of
$\partial D$ and $\partial D'$ near $0\in U$ and $0'\in U'$ respectively. It is convenient to denote their
components by $\lambda=(\lambda_0, \lambda_1, \dots, \lambda_N)$, 
$\lambda'=(\lambda'_0, \lambda'_1, \dots, \lambda'_N)$ so that in normal coordinates
\begin{equation}\label{e.tri}
(a)\ \lambda_0(z)=z_n, \ \ (b)\ \lambda'_0 (z') = z'_N.
\end{equation}
We will need some results from~\cite{DiPi3} which can be summarized as follows.

\begin{propos}[Diederich and Pinchuk, \cite{DiPi3}]\label{p.1995}
Let $F: U\to U'$ be the correspondence extending $f: D\cap U \to D'\cap U'$, where $U\ni 0$, $U'\ni 0'$
are small enough. Then
\begin{itemize}
\item[(i)] there exists a single-valued (even injective) map $\phi: \lambda(U) \to \lambda'(U')$ such that
the following diagram commutes.
\begin{equation}\label{e.chetyre}
\begin{CD}
\lambda(U) @>\phi>> \lambda'(U') \\
@VV\lambda V @VV\lambda' V\\
U @>F>> U' .
\end{CD} 
\end{equation}
For the (multiple-valued) correspondence $F$ this means that for any $z\in U$, it commutes with any
value of $F(z)$ (Cor. 4.2 and 5.5 in \cite{DiPi3});

\item[(ii)] $F(D\cap U) \subset D' \cap U'$, $F(\partial D \cap U) \subset \partial D' \cap U'$, 
$F(U\setminus D) \subset U' \setminus D'$ (Prop. 7.1);

\item[(iii)] The map $\lambda' \circ F$ is single-valued and holomorphic in $U$ with 
$\lambda'_0\circ F(z) = b(z) z_n$ and $b(0)\ne 0$ (Prop.~7.2);

\item[(iv)] $F: U \to U'$ is locally proper at the origin, i.e., $F^{-1}(0)=\{0\}$ and therefore,
$F^{-1}$ is also a holomorphic correspondence near $0'$ (Thm~5.1).

\end{itemize}
\end{propos}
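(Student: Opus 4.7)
The plan is to establish everything from the fundamental Segre variety invariance principle: if $w$ is a value of the correspondence $F$ at $z$, then $F$ maps the germ of the Segre variety $Q_z$ into $Q'_w$. For an ordinary holomorphic map extending across the boundary this is the classical reflection identity obtained by complexifying the real relation $\rho'(f(z),\overline{f(z)})=0$ valid on $\partial D$; for the correspondence $F$ one applies it branch-by-branch on $U\setminus E$ (where $E$ is the branch locus), after first checking that each branch extends holomorphically across the smooth part of $\partial D\setminus E$. The key consequence is that the target Segre variety $Q'_w$ depends only on $Q_z$, not on the chosen branch or the choice of point $z$ parametrizing that Segre variety.

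For (i) I would use this to define $\phi$ on $\lambda(U)$ by $\phi(\lambda(z))=\lambda'(w)$ for any value $w$ of $F(z)$. Well-definedness is immediate from invariance: if $\lambda(z)=\lambda(\tilde z)$ then $Q_z=Q_{\tilde z}$, so the images satisfy $Q'_w = Q'_{\tilde w}$, hence $\lambda'(w)=\lambda'(\tilde w)$. Injectivity of $\phi$ follows from the same argument applied to $F^{-1}$, which is a correspondence by (iv). For (iii), the single-valuedness and holomorphy of $\lambda'\circ F$ then follow from the factorization $\lambda'\circ F=\phi\circ\lambda$: in normal coordinates with \eqref{e.tri}, the zeroth component $\phi_0\circ\lambda(z)$ is a holomorphic function of $z$ that vanishes exactly on $\{z_n=0\}$, because the Segre locus $\{\lambda_0=0\}$ corresponds to the boundary direction that is preserved by $\phi$ thanks to (ii). Factoring out $z_n$ gives the representation $\lambda'_0\circ F(z)=b(z)z_n$, and the Hopf lemma argument already used in \eqref{e.norm0} yields $b(0)\ne 0$.

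For (ii) the inclusion $F(D\cap U)\subset D'\cap U'$ comes with the correspondence by hypothesis, and $F(\partial D\cap U)\subset \partial D'\cap U'$ is the invariance of the boundary which is already encoded in the Segre preservation at real points $z=\bar z$. For $F(U\setminus D)\subset U'\setminus D'$ I would use the explicit formula from (iii): in normal coordinates $\rho'=2x'_N+\dots$, so $\Re(\lambda'_0\circ F(z))=\Re(b(z)z_n)$ controls the sign of the defining function of $D'$ along the image. Since $b(0)\ne 0$, crossing $\partial D$ (i.e.\ changing the sign of $\Re z_n$) flips the sign of the defining function along the image, forcing $F$ to send the exterior side to the exterior side. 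For (iv), local properness reduces to showing $F^{-1}(0)=\{0\}$. The relation $\lambda'_0\circ F(z)=b(z)z_n$ implies that $F^{-1}(0)\subset\{z_n=0\}$ near the origin, so any positive-dimensional component of $F^{-1}(0)$ would lie in the boundary $\partial D$ by (ii) and would contain a positive-dimensional complex analytic germ inside $\partial D$, contradicting the finite type hypothesis.

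The main obstacle, and the step whose proof in~\cite{DiPi3} requires the most work, is the rigorous Segre invariance for the correspondence and the proof that $\phi$ is well defined and holomorphic across the branch locus. This in turn rests on showing that the individual branches of $F$ admit enough analytic continuation along Segre varieties, and that the identifications given by the Segre maps do not collapse information inadvertently. Once this geometric picture is in place, the specific analytic consequences (iii)--(iv) and the transversality estimate $b(0)\ne 0$ follow by standard Hopf-lemma and finite-type arguments along the lines of the proof of Theorem~\ref{t.1}.
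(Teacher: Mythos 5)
First, a point of order: the paper does not prove Proposition~\ref{p.1995} at all. It is imported verbatim from Diederich--Pinchuk \cite{DiPi3}, with each item keyed to a specific statement there (Cor.~4.2 and 5.5, Prop.~7.1, Prop.~7.2, Thm~5.1). So there is no in-paper argument to compare yours against; your sketch has to stand on its own. At the level of architecture it is faithful to how \cite{DiPi3} actually proceeds: the invariance of Segre varieties under the correspondence, the induced map $\phi$ on the images of the Segre maps, a Hopf-lemma argument for $b(0)\ne 0$, and finite type to force discreteness of $F^{-1}(0')$. The argument for (iv) in particular is essentially the same as the one the paper itself uses to show $f$ is proper in the proof of Theorem~\ref{t.1}.

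That said, as a proof the proposal has two genuine gaps. The first you acknowledge yourself: the entire load-bearing step is the invariance $F(Q_z)\subset Q'_w$ for every value $w$ of the multivalued $F$, together with the single-valuedness and holomorphy of $\phi$ and of $\lambda'\circ F$ \emph{across the branch locus}. Your well-definedness argument for $\phi$ ("if $Q_z=Q_{\tilde z}$ then $Q'_w=Q'_{\tilde w}$") presupposes exactly this independence of the target Segre variety from the chosen branch, so it is circular until the invariance is established; this is the bulk of Sections 4--7 of \cite{DiPi3} and cannot be compressed into a remark. The second gap is in your derivation of $F(U\setminus D)\subset U'\setminus D'$ from (iii). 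In normal coordinates $\lambda'_0\circ F=F_N$ and $\rho'\circ F = 2\,\Re F_N + \sum_{|k|,|l|\ge 1} a'_{kl}\,(\tilde F)^k(\overline{\tilde F})^l$, where $\tilde F$ denotes the tangential components; the tangential sum has no definite sign, so the sign of $\Re(b(z)z_n)$ alone does not determine which side of $\partial D'$ the image lies on. In \cite{DiPi3} this side-preservation is Prop.~7.1 and is proved \emph{before} the factorization $\lambda'_0\circ F=b(z)z_n$ of Prop.~7.2, whereas you derive (ii) from (iii); the logical order matters here because the Hopf-lemma input for $b(0)\ne 0$ and the localization of $F^{-1}(0')$ in $\partial D$ both lean on (ii).
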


We will assume that $b(z)\equiv 1$. This can be achieved by an additional change of coordinates in~$U$.
Of course, these coordinates may no longer be normal. Instead we have 
\begin{equation}\label{e.pyat'}
F_n(z) = f_n(z) = z_n .
\end{equation}
Denote by $\Omega'$ a neighbourhood of $\lambda'(0')$ in $\mathbb C^{N'+1}$. We can choose the sets
$U\ni 0$, $U'\ni 0'$, $\Omega' \ni \lambda'(0')$ such that the mappings $f: D\cap U \to D'\cap U'$ and
$\lambda': U' \to \Omega'$ are proper holomorphic. Consider for $M>0$ the following open sets
\begin{eqnarray*}
D'_M &=& \left\{z'\in U' : \ 2x'_n + M \sum_{k=0}^{N'} \left| \lambda'_k(z')\right|^2 < 0 \right\},\\
D_M &=& \left\{z\in U' : \ 2x_n + M \sum_{k=0}^{N'} \left| \lambda'_k (F(z))\right|^2 < 0 \right\} .
\end{eqnarray*}
The boundaries $\partial D'_M$, $\partial D_M$ near $0'$ and $0$ respectively, are real analytic and 
pseudoconvex because of \eqref{e.tri}(b), and of finite type because of properness of $\lambda'$ and 
Proposition~\ref{p.1995}(iv). 

We first prove Theorem~\ref{t.m} under an additional assumption that $D'_M \cap U' \subset D' \cap U'$.
It follows from Proposition~\ref{p.1995} and~\eqref{e.pyat'} that $D_M \cap U \subset D \cap U$ and
$f:D_M \cap U \to D'_M \cap U'$ is a proper holomorphic map. This implies that for 
$$
\Omega'_M = \{ w \in \Omega' : 2{\rm Re\,}w_0 + M|w|^2 < 0\} ,
$$
the map $\lambda' \circ f : D_M \cap U \to \Omega'_M$ is also proper holomorphic. By Proposition~\ref{p.1995}(iii),
the map $\lambda'\circ F = \lambda' \circ f$ extends holomorphically to a neighbourhood of $0\in U$. 

Let $E'\subset U'$ be the critical set of $\lambda' : U' \to \Omega'$ and $S\subset U$ be the branch locus
of $F: U\to U'$. By Proposition~\ref{p.1995}, $F(S)\subset E'$, moreover, $F(S)$ is contained in the 
$(n-1)$-dimensional part of $E'$. By Theorem~\ref{t.1} any $(n-1)$-dimensional component of $E'$ enters
$D'\cap U'$ at $0'$. By Proposition~\ref{p.1995}(ii), any irreducible component of $S$ also enters $D\cap U$
at $0$, and thus $f$ extends holomorphically to $0$ by Lemma~\ref{l.3}. This completes the proof of Theorem~\ref{t.m}
in the case $D'_M \cap U' \subset D' \cap U'$. However, $D'_M \cap U'$ is not necessarily a subset of $D'\cap U'$
and the general proof of Theorem~\ref{t.m} requires an additional (mainly technical) argument.

As in~\cite{DiPi0}, consider for $M>1$ two families of open sets depending on $\e \in (-\frac{1}{M},0]$:
\begin{eqnarray*}
D'_{M\e} &=& \left\{ z'\in U': \ 2x'_n+M\sum_{k=0}^{N'} |\lambda'_k (z')|^2 < \e \right\},\\
D_{M\e} &=& \left\{ z\in U: \ 2x_n+M\sum_{k=0}^{N'} |\lambda'_k\circ F(z)|^2 < \e \right\},
\end{eqnarray*}
These families are increasing for increasing $\e$ and $D'_{M0} = D'_M$, $D_{M0}=D_M$. The next proposition
summarizes some results in~\cite{DiPi0}.

\begin{propos}[Diederich and Pinchuk, \cite{DiPi0}]\label{p.1998} $\ $
\begin{itemize}
\item[(a)] The sets $D'_{M\e}$, $D_{M\e}$ are pseudoconvex and their boundaries are of
finite type at all points in $U$, respectively $U'$, where they are smooth real analytic.

\item[(b)] $D'_{M\e} \subset D'\cap U$ and $D_{M\e} \subset D \cap U$ if $\e \in (-\frac{1}{M},0]$ is
close to $-\frac{1}{M}$.

\item[(c)] For $M>0$ sufficiently large and any $\e\in (-\frac{1}{M},0]$ the nonsmooth part of $\partial D'_{M\e}$
is contained in $D'\cap U'$ and the nonsmooth part of $\partial D_{M\e}$ is contained in $D \cap U'$.
\end{itemize}
\end{propos}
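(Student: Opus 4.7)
The plan is to work with the natural defining functions
$$
\rho'_{M\e}(z') = 2x'_n + M\sum_{k=0}^{N'}|\lambda'_k(z')|^2 - \e, \qquad \rho_{M\e}(z) = 2x_n + M\sum_{k=0}^{N'}|\lambda'_k(F(z))|^2 - \e,
$$
where $\rho_{M\e}$ makes sense as a real analytic function thanks to single-valuedness of $\lambda' \circ F$ from Proposition~\ref{p.1995}(iii). Each is a sum of a pluriharmonic function and a plurisubharmonic one (a sum of squared moduli of holomorphic functions), and therefore is globally plurisubharmonic; this immediately gives pseudoconvexity at every smooth boundary point in part (a). For the finite type statement I would argue by contradiction. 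Suppose a non-constant holomorphic disk $\phi:\Delta\to\partial D'_{M\e}$ exists; then $M\sum|\lambda'_k\circ\phi|^2 = \e - 2\,\Re\phi_n$ is harmonic on $\Delta$, so applying the Laplacian to the left-hand side shows each holomorphic function $\lambda'_k\circ\phi$ must be constant. Finiteness of $\lambda'$ (which follows from essential finiteness of $\partial D'$) then forces $\phi$ itself to be constant. The argument for $\partial D_{M\e}$ is identical, using local finiteness of $\lambda'\circ F$ from Proposition~\ref{p.1995}(iv).

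For part (b), the decisive structural input is $\lambda'_0(z')=z'_n$ from \eqref{e.tri}(b). On $D'_{M\e}$ the inequality $2x'_n+M|z'_n|^2\le\e$ forces $z'_n$ into a disk in $\cx$ of radius $\sim 1/\sqrt{M}$ around a negative real value, so in particular $x'_n<\e/2$ is uniformly negative. Simultaneously $\sum_{k\ge 1}|\lambda'_k(z')|^2\le|\e|/M$ is small, and finiteness of $\lambda'$ at the origin confines $z'$ to a correspondingly small neighborhood of $0$ in $U'$; there $\rho'(z')=2x'_n+O(|\tilde z'|^2)$ is dominated by the negative first term, giving $\rho'(z')<0$, that is, $z'\in D'$. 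The parallel argument for $D_{M\e}\subset D\cap U$ uses $f_n(z)=z_n$ from \eqref{e.pyat'} and local properness of $F$ from Proposition~\ref{p.1995}(iv).

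Part (c) is where the genuine technical work lies, and is the step I expect to occupy most of the proof. The nonsmooth locus of $\partial D'_{M\e}$ is contained in $\{d\rho'_{M\e}=0\}$. Computing the $\partial/\partial z'_n$ component and using $\lambda'_0=z'_n$, one obtains
$$
\overline{z'_n}+\sum_{k\ge 1}\overline{\lambda'_k}\,\frac{\partial\lambda'_k}{\partial z'_n}=-\frac{1}{M},
$$
together with analogous equations for the remaining coordinate directions. The main obstacle is to choose $M$ large enough that the cross terms $\overline{\lambda'_k}\,\partial\lambda'_k/\partial z'_n$ cannot cancel the $\overline{z'_n}$ term unless $|z'_n|$ and the $|\lambda'_k|$ each carry a definite lower bound of order $1/M$. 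Once this is achieved, the constraint $2x'_n+M|z'_n|^2\le 0$ forces $x'_n\le -c/M$ for some $c>0$, placing the candidate point strictly inside $D'$ in the normal direction, hence in $D'\cap U'$. The analogous computation with $f_n(z)=z_n$ in place of $\lambda'_0=z'_n$ handles $\partial D_{M\e}$. The sign bookkeeping in this gradient estimate is the technical heart of the proof, to be carried out along the lines of \cite{DiPi0}.
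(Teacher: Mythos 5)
The paper does not actually prove this proposition: it is imported wholesale from Diederich--Pinchuk \cite{DiPi0} (``The next proposition summarizes some results in \cite{DiPi0}''), so there is no internal argument to compare yours against. Judged on its own merits, your part (a) is correct and is the standard argument: $\rho'_{M\e}$ and $\rho_{M\e}$ are plurisubharmonic (here single-valuedness of $\lambda'\circ F$ from Proposition~\ref{p.1995}(iii) is indeed the point), and the Laplacian-plus-finiteness argument excludes nonconstant analytic discs in the zero sets.

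Parts (b) and (c), however, share one genuine gap, and it sits exactly where the content of the Diederich--Pinchuk construction lies. In both parts you reduce to showing $\rho'(z')<0$ at points where $x'_n\approx -1/M$ and $\sum_{k\ge 1}|\lambda'_k(z')|^2\lesssim 1/M^2$, and you conclude that the higher-order part of $\rho'$ ``is dominated by the negative first term.'' But that higher-order part is $O(|\tilde z'|^2)$, and the only control on $|\tilde z'|$ that finiteness of the Segre map gives you is a {\L}ojasiewicz-type bound $|\tilde z'|\le C\,|\lambda'(z')|^{1/s}$ for some integer $s\ge 1$ depending on the multiplicity of $\lambda'$; this yields $O(|\tilde z'|^2)=O(M^{-2/s})$, which does not beat $|2x'_n|\sim 1/M$ once $s\ge 2$. (Moreover, as $\e\to -1/M$ the set $D'_{M\e}$ collapses not to the origin but to the finite fibre $\{z'_n=-1/M\}\cap\{\lambda'_k=0,\ |k|\ge 1\}$, since the defining inequality can be rewritten as $M|z'_n+1/M|^2+M\sum_{k\ge1}|\lambda'_k|^2\le \e+1/M$; so ``a small neighbourhood of $0'$'' is not the right picture.) The missing ingredient is the structural identity tying $\rho'$ to the Segre map: $\rho'(z',\bar z')$ equals a nonvanishing factor times $2x'_n-\sum_{|k|\ge1}\overline{\lambda'_k(z')}\,\tilde z'^{\,k}$, so the deviation of $\rho'$ from $2x'_n$ is controlled \emph{linearly} by the quantity that the definition of $D'_{M\e}$ actually bounds. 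Without this identity neither the inclusion in (b) nor the localization of the nonsmooth set in (c) closes; and in (c) you in any case explicitly defer the decisive gradient estimate to \cite{DiPi0}, so that part is a plan rather than a proof.
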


To finish the proof of Theorem~\ref{t.m} consider
$$
\Omega'_{M\e} = \left\{ w'\in \mathbb C^{N'+1}:\ 2u_0 + M|w'|^2 < \e \right\} .
$$
If $M$, $\e$ are chosen as in Proposition~\ref{p.1998}, then $f: D_{M\e} \to D'_{M\e}$ and 
$\lambda \circ f : D_{M\e} \to \Omega'_{M\e}$ are proper holomorphic maps. Consider the largest 
$\e_0 \in (-\frac{1}{M},0]$ such that $f$ extends to a proper holomorphic map $\tilde f: D_{M\e} \to D'_{M\e_0}$.
By Proposition~\ref{p.1998}, $\tilde f$ is holomorphic on the nonsmooth part of $\partial D_{M\e_0}$. Let us show
that $\tilde f$ extends holomorphically to any smooth real analytic boundary point $a\in U$ of $D_{M\e}$. We only
need to consider the case $a\in S$. Applying, as before, Theorem~\ref{t.1} to the map $\lambda':D'_{M\e_0} \to
\Omega'_{M\e_0}$, we conclude that any irreducible $(n-1)$-dimensional component $E'_j \ni \tilde f(a)$ of the
critical set $E'$ of $\lambda'$ enters $D'_{M\e_0}$ at $\tilde f(a)$. By Proposition~\ref{p.1995}, any irreducible
component $S_j\ni a$ of $S$ enters $D_{M\e_0}$ at $a$. Thus, by Lemma~\ref{l.3}, $\tilde f$ extends 
holomorphically to any such $a$. This means that $f$ extends holomorphically to a neighbourhood of the closure
of $D_{M\e_0}$ and $\e_0=0$. This completes the proof.
\end{proof}


\end{document}